\newtheorem{theorem}{Theorem}[section]
\newtheorem{lemma}[theorem]{Lemma}
\theoremstyle{definition}
\theoremstyle{definition}
\definecolor{orange}{rgb}{1,0.5,0}
\begin{document}
\title{A new Proof of the Corona Problem  }
\author{Hunduma Legesse Geleta}
\address{Addis Ababa Univesity, College of natural and Computational Sciences, Department of Mathematics}
\email{hunduma.legesse@aau.edu.et}
\address{}
\email{}

%\subjclass[2000]{Primary 11B68}
 %\thanks{}
\keywords{corona problem; corona data; holomorphic function; Carleson measure }
\begin{abstract}
 The corona problem was motivated by the question of the density of the open unit disc  in the maximal ideal space of the algebra of bounded holomorphic functions on the unit disc. The corona problem connects operator theory, function theory, and geometry. It has been studied by different scholars in different contexts and found to be an important part of classical function theory, and of modern harmonic analysis. In this paper we give a new  proof of the corona theorem based on  Bezout's formulation of the corona problem. The main feature of this method sheds light on how to extend from complex one variable to complex several variables.  
\end{abstract}

\maketitle
\def\theequation{\thesection.\arabic{equation}}
\section{Introduction}
% -----------------------------------------------------------

In Mathematics, the corona theorem is a result about the spectrum $\mathbb{S}$ of the bounded holomorphic functions on the unit disc $\mathbb{D}$, conjectured by Kakutani (1941)~~and proved by Lennart Carleson (1962)~~in his paper [3]. 
 The corona problem was motivated by the question of the density of the open unit disc $\mathbb{D}$ in the maximal ideal space of the algebra $H^\infty(\mathbb{D})$, of bounded holomorphic functions on $\mathbb{D}$ engaged a lot of Mathematician  to solve this problem in different ways and in different contexts. 
In the mid-1960s, Edgar Lee Stout [4] and Norman Alling [4] proved that the corona theorem remains true on a finitely-connected Riemann surfaces. By contrast, Brain Cole [4] gave an example of an infinitely connected Riemann surface on which the corona theorem fails. Cole's counter example was built by exploiting the connections between representing measures and uniform algebras. Around the same time, Kenneth Hoffman [4] showed that there is considerable analytic structure in the fiber of the maximal ideal space $\mathbb{H}^\infty(\mathbb{D})$.

In [4], Lars Hormander introduced a new method for studying the corona problem. He first constructed a preliminary nonanalytic function solution of Bezout's equation, and then correct it to get an analytic one by solving an appropriate in-homogeneous Riemann equations ($\bar{\partial}$- equation). The new method developed by Hormander allows to move from two pieces of corona data to $N$-pieces. But to do this Carleson used a clever trick and which was based on the Riemann mapping theorem, so there was no hope to generalize it to higher dimensions. Hormander's paper raised hopes that the corona theorem can be generalized to higher dimensions, but the hopes for an easy generalization were squashed by N. Varopoulos [4], who had shown that in the unit ball in two dimensional complex plane, the Carleson measure condition on the right hand side does not imply the existence of a bounded solution of $\bar{\partial}$- equation. As  a by-product, of Carleson's work, the Carleson measure was invented which itself is a very useful tool in modern function theory. It remains an open question whether there are versions of the corona theorem for every planar domain or for higher-dimensional domains. In 1979 T. Wolff presented a new proof of the corona theorem, which followed Hormander's approach with one critical difference. Wolff used a different condition for the existence of a bounded solution of the $\bar{\partial}$- equation $\partial w = G$, based on the "second order" Green's formula.

In this paper we are concerned with Bezout's formulation of the corona problem which is the topological equivalent statement about the density of the maximal ideal which was translated into the algebraic Bezout's formulation. Specifically, we prove the following: Let $f_1,  f_2, ...,f_N$ be bounded holomorphic functions on the unit disc $\mathbb{D}$in the complex plane $\mathbb {C}$ satisfying  $$\lvert f_1 \rvert+ \lvert f_2 \rvert+ ...+\lvert f_N\rvert > \delta > 0$$ on $\mathbb{D}$. Then there exist bounded analytic functions $g_1,  g_2, ..., g_N$ such that $$g_1(z)f_1(z)+g_2(z)f_2(z)+ ...+g_N(z)f_N(z)=1$$ on $\mathbb{D}$. 

We prove the corona theorem on the unit disc $\mathbb{D}$ and it can be easily extended to any bounded domain in the complex plane $\mathbb {C}.$ We also hope this method of proof can be extended to domains in complex several variables.

This paper is organized as follows: in section two we review the works of different scholars which are related to the corona problem specially in function theory. In section three we give a simple proof of corona problem. In section four we give summaries and conclusions.

% -----------------------------------------------------------

\section{Preliminaries}
In this section we review some literature concerning the corona problem. Specifically we review the works of Carleson, Wolff, Hormander, and others as cited in [4]. We also collect some important properties of analytic functions which we may refer later on. 
The commutative Banach algebra and Hardy space $\mathbb{H}^\infty(\mathbb{D})$ consists of the bounded holomorphic functions on the unit disc $\mathbb{D}$ with supremum norm. It was conceived by I. M. Gelfand in his thesis in 1936 as cited in [4]. The spectrum $\mathbb{S}$ (the closed maximal ideals)contains $\mathbb{D}$ as an open subspace because for each $z$ in $\mathbb{D}$ there is a maximal ideal consisting of functions $f$ with $f(z)=0$. The subspace $\mathbb{D}$ cannot makeup the entire spectrum $\mathbb{S},$ essentially because the spectrum is a compact space and $\mathbb{D}$ is not. The complement of the closure of $\mathbb{D}$ in $\mathbb{S}$ was called the corona by Newman (1959), and the corona theorem states that the corona is empty, or in other words the open unit disc $\mathbb{D}$ is dense in the spectrum $\mathbb{S}.$ A more elementary formulation is that elements $f_1, f_2, ..., f_N$ generate the unit ideal of $H^\infty(\mathbb{D})$ if and only if there is some $\delta > 0$ such that $\lvert f_1 \rvert+ \lvert f_2 \rvert +...+ \lvert f_N \rvert > \delta > 0$ everywhere in the unit disc. Newman showed that the corona theorem can be reduced to an interpolation problem, which was then proved by Carleson [3]. In 1979 Thomas Wolff gave a simplified (but unpublished) proof of the corona theorem, as described in (Koosis 1980) and (Gamelin 1980). Cole later showed that this result cannot be extended to all open Riemann surfaces (Gamelin 1978).  Note that if one assumes the continuity up to the boundary in corona theorem, then the conclusion follows easily from the theory of commutative Banach algebra (Rudin 1991). 
The corona problem connects operator theory, function theory, and geometry. It is an important part of classical function theory, and of modern harmonic analysis. The corona problem was solved by Lennart Carleson in his paper [3]. In his proof he introduced the idea of Carleson measure which is used to control the length of certain curves in the disc that wind around the zeros of a bounded analytic function. The construction was very clever and quite involved and has proved to be useful in other areas of mathematics. The corona construction is widely regarded as one of the most difficult arguments in modern function theory as pointed out by Peter Jones, in [4].

 In 1979 T. Wolff presented a new proof of the corona theorem, which followed Hormander's approach with one critical difference. Wolff used a different condition for the existence of a bounded solution of the $\bar{\partial}$- equation $\partial w = G$, based on the "second order" Green's formula. That was crucial because, for the trivial non-analytic solution $$g_k =\frac{\bar f_k}{\sum_{j=1}^N \lvert f_j^2 \rvert}$$ of the Bezout's equation $$f_1g_1+f_2g_2+...+f_Ng_N=1$$

the right sides of the corresponding $\bar{\partial}$- equation satisfied this condition.

 \subsection*{Bezout's Formulation of the Corona Problem}
Let $f_1, f_2,...,f_N$ be bounded analytic functions on the unit disc $\mathbb{D}$ that satisfy $$\lvert f_1 \rvert+ \lvert f_2 \rvert +...+ \lvert f_N \rvert > \delta > 0$$ on $\mathbb{D}.$ Then do there exist bounded analytic functions $g_1, g_2, ..., g_N $ on $\mathbb{D}$ such that, $$f_1g_1+f_2g_2+...+f_Ng_N=1$$ on $\mathbb{D}$? Finding such bounded analytic functions could be very easy if $$G(z)=f_1(z)+f_2(z)+...+f_N(z)$$ was zero free on $\mathbb{D},$ since $$g_k(z)=\frac{1}{G(z)}, k= 1, 2, ...N$$ solve the corona problem of Bezout's formulation.

\section{Main Results}
% -----------------------------------------------------------

In this section before proving the main theorem we state the following lemma from Real and Complex Analysis third edition by Walter Rudin. 
\begin{lemma}(Walter Rudin [7])
If $z_1, z_2, ..., z_N$ are complex numbers, then there is a subset $S$ of $ \{1, 2, ..., N\}$for which $$\lvert \sum_{k\in S}z_k\rvert \geq \frac{1}{\pi}\sum_{k=1}^N \lvert z_k\rvert.$$
\end{lemma}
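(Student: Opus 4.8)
The plan is to prove this by a rotation-and-averaging argument, which isolates the precise constant $1/\pi$. Discarding any terms with $z_k = 0$, I would first write each remaining number in polar form as $z_k = |z_k|\, e^{i\theta_k}$ with $\theta_k$ real, and for each real parameter $\alpha$ introduce the index set
$$S(\alpha) = \{\, k : \cos(\theta_k - \alpha) > 0 \,\},$$
i.e. the indices whose direction points into the open half-plane $\{\operatorname{Re}(e^{-i\alpha} w) > 0\}$. Summing only over $S(\alpha)$ is designed so that, after rotation by $e^{-i\alpha}$, every surviving term has positive real part and the contributions reinforce instead of cancel.

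The key estimate, for fixed $\alpha$, is
$$\left| \sum_{k \in S(\alpha)} z_k \right| \;\geq\; \operatorname{Re}\!\left( e^{-i\alpha} \sum_{k \in S(\alpha)} z_k \right) \;=\; \sum_{k \in S(\alpha)} |z_k|\cos(\theta_k - \alpha) \;=\; \sum_{k=1}^N |z_k|\,\big(\cos(\theta_k - \alpha)\big)^{+},$$
where $x^{+} = \max(x,0)$; this uses only $|w| \geq \operatorname{Re} w$ together with the fact that multiplication by $e^{-i\alpha}$ preserves the modulus. I would then average the right-hand side over $\alpha \in [0, 2\pi)$: by linearity,
$$\frac{1}{2\pi}\int_0^{2\pi} \sum_{k=1}^N |z_k|\,\big(\cos(\theta_k - \alpha)\big)^{+} \, d\alpha = \sum_{k=1}^N |z_k| \cdot \frac{1}{2\pi}\int_0^{2\pi} \big(\cos\beta\big)^{+}\, d\beta = \frac{1}{\pi}\sum_{k=1}^N |z_k|,$$
the last equality because $\frac{1}{2\pi}\int_0^{2\pi}(\cos\beta)^{+}\, d\beta = \frac{1}{2\pi}\int_{-\pi/2}^{\pi/2}\cos\beta\, d\beta = \frac{1}{\pi}$. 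Since a function cannot lie strictly below its own average, there is some $\alpha_0$ with $\sum_{k=1}^N |z_k|(\cos(\theta_k - \alpha_0))^{+} \geq \frac{1}{\pi}\sum_{k=1}^N |z_k|$, and then $S = S(\alpha_0)$ satisfies the claim by the key estimate.

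I do not expect a genuine obstacle here; the argument is short once the set $S(\alpha)$ is written down, and essentially all of the content is the elementary integral $\frac{1}{2\pi}\int_0^{2\pi}(\cos\beta)^{+}\,d\beta = \frac{1}{\pi}$ that fixes the constant. The only points needing a word of care are bookkeeping: terms with $z_k = 0$ are simply omitted (assign them any $\theta_k$, since $|z_k| = 0$), and the measure-zero set of $\alpha$ at which some $\cos(\theta_k - \alpha)$ vanishes does not affect the integral. If one prefers to avoid the averaging step, one can instead note that $S(\alpha)$ takes only finitely many distinct values as $\alpha$ ranges over $[0,2\pi)$, so $\max_\alpha \big|\sum_{k \in S(\alpha)} z_k\big|$ is attained and the same estimate applies at the maximizer.
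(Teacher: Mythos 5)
Your proposal is correct, and it is precisely the classical averaging argument (choose the half-plane $\{\operatorname{Re}(e^{-i\alpha}w)>0\}$, bound the modulus below by the real part, and average over $\alpha$ using $\frac{1}{2\pi}\int_0^{2\pi}(\cos\beta)^{+}\,d\beta=\frac{1}{\pi}$) that appears in Rudin's \emph{Real and Complex Analysis}, which is exactly the source the paper cites for this lemma rather than giving its own proof. So your argument coincides with the proof the paper relies on, and no further comparison is needed.
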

We can modify lemma 3.1 for finite number of complex functions of one variable as in the following lemma.
\begin{lemma}
If $f_1(z), f_2(z),..., f_N(z)$ are complex functions of one variable on a domain $\Omega$, then there is a nonempty subset $S$ of $\{1, 2, ..., N\}$ for every $z\in\Omega$ ~~for which $$\lvert \sum_{k\in S}f_k(z)\rvert \geq \frac{1}{\pi}\sum_{k=1}^N \lvert f_k(z)\rvert.$$
\end{lemma}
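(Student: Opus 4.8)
The plan is to reduce Lemma 3.2 to Lemma 3.1 by invoking the latter pointwise. Fix an arbitrary $z \in \Omega$ and set $z_k = f_k(z)$ for $k = 1, \dots, N$; these are $N$ complex numbers, so Lemma 3.1 furnishes a subset $S = S(z) \subseteq \{1, \dots, N\}$ with $\bigl\lvert \sum_{k \in S} z_k \bigr\rvert \ge \frac{1}{\pi} \sum_{k=1}^N \lvert z_k \rvert$, which is precisely $\bigl\lvert \sum_{k \in S} f_k(z) \bigr\rvert \ge \frac{1}{\pi} \sum_{k=1}^N \lvert f_k(z) \rvert$. Since $z$ was arbitrary, this already gives the desired inequality at every point of $\Omega$.

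The only point that needs a word is the requirement that $S$ be nonempty. If $\sum_{k=1}^N \lvert f_k(z)\rvert > 0$, the right-hand side, hence the left-hand side, is strictly positive, and since the empty sum is $0$ this forces $S(z) \neq \emptyset$, so nothing extra is needed. If instead $f_1(z) = \dots = f_N(z) = 0$, both sides vanish and one may simply take $S(z) = \{1\}$. In either case a nonempty index set is obtained at $z$, completing the argument.

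I do not anticipate a genuine obstacle: the content is the observation that Rudin's combinatorial estimate concerns only finite tuples of complex numbers and so may be applied separately at each point of $\Omega$. The one caveat worth flagging — presumably the reason the authors isolate this as a lemma — is that $S$ need not be uniform in $z$, so it should be read as a function $S : \Omega \to 2^{\{1,\dots,N\}} \setminus \{\emptyset\}$; if a later step required $S$ to be locally constant (so that the partial sum $\sum_{k \in S(z)} f_k(z)$ were holomorphic on each piece), one would partition $\Omega$ into the finitely many level sets of $S(\cdot)$ and argue on each. But the statement as written asks only for pointwise existence, so the pointwise application of Lemma 3.1 suffices.
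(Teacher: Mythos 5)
Your proof is correct, and it differs in form from the paper's: you apply Lemma 3.1 directly at each fixed $z$ (taking $z_k = f_k(z)$ and letting $S = S(z)$ depend on the point), whereas the paper argues by contradiction. The direct route is actually the cleaner one, because the paper's contradiction step negates the statement as ``for every nonempty subset $S$ there is $z_0 \in \Omega$ where the inequality fails,'' in which the point $z_0$ depends on $S$, and then jumps to a single $z_0$ at which the inequality fails for \emph{every} $S$ --- a quantifier exchange that is not justified as written (it would only be the correct negation under the pointwise reading, where the direct argument is simpler anyway). Your version also handles the nonemptiness of $S$ explicitly in the degenerate case $f_1(z) = \dots = f_N(z) = 0$, which the paper passes over. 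Finally, your caveat that $S$ must be read as a function $S : \Omega \to 2^{\{1,\dots,N\}} \setminus \{\emptyset\}$, so that $\sum_{k \in S(z)} f_k(z)$ need not be a single holomorphic function on $\Omega$, is exactly the right point to flag: the lemma as stated only gives pointwise existence, and any later use that treats $\sum_{k \in S} f_k$ as one fixed analytic function (as the paper subsequently does) would require additional argument, e.g.\ constancy of $S(\cdot)$ on pieces of $\Omega$, which does not follow from this lemma.
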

\begin{proof}
Assume there does not exist such $S$ for every $z$ in the domain for which the inequality in the lemma 3.2 holds. Then for every nonempty subset $S$  of $\{1, 2, ..., N\}$ there is $z_0\in\Omega$ for which the inequality is violated. This implies that $f_1(z_0), f_2(z_0),..., f_N(z_0) $ are complex numbers for which the inequality is violated for every subset $S$ of $\{1, 2, ..., N\}.$
This contradicts the result in the lemma 3.1.\\
 Therefore, the assertion of lemma 3.2 holds true.
\end{proof}

\begin{theorem}
Let $f_1, f_2,...,f_N$ be in $H^\infty(\mathbb{D})$ such that $$\lvert f_1 \rvert+ \lvert f_2 \rvert +...+ \lvert f_N \rvert > \delta > 0$$ on $\mathbb{D}$. Then there exist a subset $S$ of $\{1, 2,..., N\}$ ~~for each $z$ in $\mathbb{D},$ such that $$\sum_{k\in S}f_k(z) \neq 0.$$ 
\end{theorem}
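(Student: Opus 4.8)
\emph{Proof proposal.}
The plan is to reduce the statement to a single pointwise application of Lemma 3.1 (the pointwise content of Lemma 3.2) together with the corona hypothesis. First I would fix an arbitrary point $z\in\mathbb{D}$ and regard $f_1(z),f_2(z),\dots,f_N(z)$ merely as $N$ complex numbers. Lemma 3.1 then provides a subset $S=S(z)\subseteq\{1,2,\dots,N\}$ with
$$\Bigl|\sum_{k\in S}f_k(z)\Bigr|\;\geq\;\frac{1}{\pi}\sum_{k=1}^{N}\lvert f_k(z)\rvert .$$
Plugging in the hypothesis $\lvert f_1\rvert+\lvert f_2\rvert+\dots+\lvert f_N\rvert>\delta$ on $\mathbb{D}$ yields $\bigl|\sum_{k\in S}f_k(z)\bigr|>\delta/\pi>0$, so $\sum_{k\in S}f_k(z)\neq 0$; in particular $S$ is nonempty, since the empty sum vanishes. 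Letting $z$ range over $\mathbb{D}$ finishes the argument.

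The one point that deserves care is the quantifier order: the subset delivered by Lemma 3.1 depends on the particular tuple of numbers, hence on $z$, so what one actually obtains is an assignment $z\mapsto S(z)$, not a single $S$ good for all $z$ simultaneously --- which is precisely what the theorem claims. This is already enough for the sequel: there are only finitely many (namely $2^{N}-1$) nonempty candidate subsets, so $\mathbb{D}$ is the finite union, over such $S$, of the sets on which $\sum_{k\in S}f_k$ is nonvanishing, and on each of these the finite sum $G_S=\sum_{k\in S}f_k\in H^\infty(\mathbb{D})$ is zero-free with $\lvert G_S\rvert>\delta/\pi$ there, giving the local Bezout solution $1/G_S$.

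I do not expect a genuine obstacle in establishing this particular statement --- it is a direct quantitative corollary of Rudin's lemma and the hypothesis. The real difficulty is postponed to the next stage: gluing the local data $1/G_S$ on the various pieces into globally bounded analytic functions $g_1,\dots,g_N$ on all of $\mathbb{D}$, which is where a partition-of-unity argument together with a $\bar\partial$-correction and a Carleson-measure estimate would be needed.
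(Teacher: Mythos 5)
Your argument is correct for the only tenable reading of the statement (a subset $S=S(z)$ for each $z$), and at its core it is the same idea as the paper's: apply Rudin's lemma at the point $z$ and feed in the corona hypothesis. The differences are worth recording, and they are in your favor. You invoke Lemma 3.1 directly on the numbers $f_1(z),\dots,f_N(z)$, whereas the paper routes the argument through Lemma 3.2, whose proof by contradiction mishandles the quantifiers (negating the claim produces, for each subset $S$, a point $z_0=z_0(S)$ where the inequality fails; since $z_0$ varies with $S$, no single tuple of complex numbers violating Lemma 3.1 is obtained, so no contradiction results) --- your pointwise application of Lemma 3.1 is the clean way to get what Lemma 3.2 and Theorem 3.3 can legitimately assert. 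You also state the correct lower bound $\lvert\sum_{k\in S}f_k(z)\rvert>\delta/\pi$, where the paper writes $\geq\frac{1}{\pi}\sum_k\lvert f_k(z)\rvert>\delta$, a harmless but real slip. Most importantly, you make explicit that the subset depends on $z$, which the paper's phrasing blurs; this is exactly the point at which the paper's subsequent Theorem 3.4 collapses, since $G(z)=\sum_{k\in S(z)}f_k(z)$ with $z$-dependent $S$ is not a single analytic function and $1/G$ need not lie in $H^\infty(\mathbb{D})$ (nor even be continuous). Your closing observation --- that the pointwise/local statement only yields local reciprocals $1/G_S$ on the finitely many open sets where $G_S\neq 0$, and that the genuine work is gluing these into bounded analytic $g_1,\dots,g_N$ via a $\bar\partial$-correction with Carleson-measure control --- is precisely the gap the paper does not close, so your proposal proves Theorem 3.3 as intended while correctly flagging that it cannot, by itself, deliver the corona theorem.
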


\begin{proof}
Suppose $f_1, f_2,...,f_N$ are in $H^\infty(\mathbb{D})$ such that $$\lvert f_1 \rvert+ \lvert f_2 \rvert +...+ \lvert f_N \rvert > \delta > 0$$ on $\mathbb{D}.$ Then by lemma 3.2, there is a subset $S$ of $\{1, 2, ..., N\}$ such that  $$\lvert \sum_{k\in S}f_k(z)\rvert \geq \frac{1}{\pi}\sum_{k=1}^N \lvert f_k(z)\rvert > \delta > 0 .$$
This implies that $$\lvert \sum_{k\in S}f_k(z)\rvert  > \delta > 0.$$ 
Thus, on $\mathbb{D}$ $$\sum_{k\in S}f_k(z) \neq 0.$$
\end{proof}
\bigskip

As given in [5] ~~the corona problem, when reinterpreted as problem in function theory asks, given $f_1, f_2,...,f_N$ in  $H^\infty(\mathbb{D})$ such that 
$$\lvert f_1 \rvert+ \lvert f_2 \rvert +...+ \lvert f_N \rvert > \delta > 0$$ on $\mathbb{D}$, there exist $g_1, g_2, ..., g_N $~ in ~~$ H^\infty(\mathbb{D}) $ ~such that, $$f_1g_1+f_2g_2+...+f_Ng_N=1$$ on $\mathbb{D}.$ Now we state this version of corona theorem and give a simple proof which follows directly from theorem 3.3.

\begin{theorem}
Let $f_1, f_2,...,f_N$ be in $H^\infty(\mathbb{D})$  satisfying Bezout's formulation of the corona problem. Then there are  $g_1, g_2, ...,_N$  in $H^\infty(\mathbb{D})$ such that
  $$f_1g_1+f_2g_2+...+f_Ng_N = 1.$$ 
\end{theorem}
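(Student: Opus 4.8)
The plan is to derive Theorem 3.5 directly from Theorem 3.3 by an elementary algebraic argument, the point being that a bounded holomorphic function which is bounded away from zero has a bounded holomorphic reciprocal. First I would apply Theorem 3.3 to obtain a nonempty subset $S \subseteq \{1, 2, \ldots, N\}$ with $\sum_{k \in S} f_k(z) \neq 0$ for every $z \in \mathbb{D}$. For a \emph{bounded} solution, however, mere nonvanishing is not enough; what is needed --- and what the proof of Theorem 3.3 in fact delivers --- is the uniform estimate $\bigl\lvert \sum_{k \in S} f_k(z)\bigr\rvert \geq \tfrac{1}{\pi}\sum_{k=1}^N \lvert f_k(z)\rvert \geq \tfrac{\delta}{\pi}$ on $\mathbb{D}$. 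So I would set $F = \sum_{k \in S} f_k$, note that $F \in H^\infty(\mathbb{D})$ as a finite sum of bounded holomorphic functions, and record that $\lvert F(z)\rvert \geq \delta/\pi$ throughout the disc.

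Next I would observe that $1/F$ is holomorphic on $\mathbb{D}$ (since $F$ has no zeros, dividing introduces no poles) and satisfies $\lVert 1/F\rVert_\infty \leq \pi/\delta$, so $1/F \in H^\infty(\mathbb{D})$. Finally, defining $g_k = 1/F$ for $k \in S$ and $g_k = 0$ for $k \notin S$, each $g_k$ lies in $H^\infty(\mathbb{D})$ and
$$\sum_{k=1}^N f_k(z)\, g_k(z) = \frac{1}{F(z)}\sum_{k \in S} f_k(z) = 1 \qquad (z \in \mathbb{D}),$$
which is the desired Bezout identity.

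Given Theorem 3.3, there is no serious obstacle: essentially all of the substance has been absorbed into Theorem 3.3 and, beneath it, into Lemma 3.2 and the Rudin estimate of Lemma 3.1, so the passage to Theorem 3.5 is purely formal. The one step deserving care is the point flagged above --- one must carry along the \emph{quantitative} lower bound $\delta/\pi$ produced inside the proof of Theorem 3.3, not merely its stated qualitative conclusion $\sum_{k\in S} f_k \neq 0$, since without a uniform positive lower bound on $\lvert F\rvert$ the reciprocal $1/F$ need not lie in $H^\infty(\mathbb{D})$ and the conclusion would fail.
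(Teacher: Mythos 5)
Your reduction has a genuine gap, and it is the same gap that undermines the paper's own proof of this theorem: the subset $S$ you extract from Theorem 3.3 cannot be taken independent of $z$. Rudin's Lemma 3.1 is a statement about one fixed tuple of complex numbers, so Lemma 3.2 and Theorem 3.3, as actually proved, only yield: for each $z\in\mathbb{D}$ there is a subset $S(z)$ with $\lvert\sum_{k\in S(z)}f_k(z)\rvert\ge\frac{1}{\pi}\sum_{k=1}^N\lvert f_k(z)\rvert$. The choice of $S(z)$ (essentially the set of indices $k$ for which $f_k(z)$ points into a suitable half-plane) varies with $z$, whereas your argument needs one fixed $S$ valid for every $z$ --- first so that $F=\sum_{k\in S}f_k$ is a single holomorphic function, and second so that the uniform bound $\lvert F\rvert\ge\delta/\pi$ holds on all of $\mathbb{D}$. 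Neither is available, and in fact no fixed $S$ with $\sum_{k\in S}f_k$ zero-free need exist. Take $f_1(z)=z$ and $f_2(z)=\frac{2z-1}{2-z}$: these are Blaschke factors with no common zero, so by continuity and compactness $\lvert f_1\rvert+\lvert f_2\rvert\ge\delta>0$ on $\mathbb{D}$; yet $f_1$ vanishes at $0$, $f_2$ vanishes at $\tfrac12$, and $f_1+f_2=\frac{-z^2+4z-1}{2-z}$ vanishes at $2-\sqrt{3}\in\mathbb{D}$. Every nonempty subset sum has a zero, so the uniform reading of Theorem 3.3 on which you (and the paper) rely is false, and a fortiori the bound $\lvert F\rvert\ge\delta/\pi$ cannot be salvaged.

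Consequently the passage from Theorem 3.3 to the Bezout identity is not ``purely formal'': with $S=S(z)$ varying, $g_k(z)=1/F(z)$ is not even a well-defined function on $\mathbb{D}$, let alone holomorphic and bounded. Your instinct to insist on the quantitative lower bound $\delta/\pi$ --- a point the paper never verifies for its $g_k$ --- is exactly right, but that bound is only pointwise with a $z$-dependent subset and cannot be transferred to a single $S$. A correct proof of this theorem cannot avoid real analytic work: one must follow Carleson's construction or the H\"ormander--Wolff scheme, starting from the smooth solution $\varphi_k=\bar f_k/\sum_j\lvert f_j\rvert^2$ and correcting it by solving a $\bar\partial$-equation with bounded solution, rather than inverting a single subset sum.
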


\begin{proof}
Finding $g_1, g_2, ..., g_N $ in $ H^\infty(\mathbb{D}) $ is very easy if for each $z$ in $\mathbb{D}$, $$G(z)= f_1(z)+f_2(z)+...f_N(z)\neq 0.$$ In this case for each k, we put$$g_k(z)=\frac{1}{G(z)} ,$$ so that $g_k$ is analytic on $\mathbb{D}$ and $$f_1g_1+f_2g_2+...+f_Ng_N = 1.$$  Otherwise, by theorem 3.3 there exists  a subset $S$ of $\{1, 2,..., N\}$~~for each $z \in \mathbb{D} $ for which  $$G(z)= \sum_{k\in S}f_k(z) \neq 0.$$ 

In this case, we put $$g_k(z)=\frac{1}{G(z)} ,$$ for each $k \in S$ and   $g_k(z)=0$ for $k\notin S$ so that $g_k$ is bounded analytic functions on $\mathbb{D}$ for each $k\in \{1, 2, ..., N\}$. Moreover, $$\sum_{k=1}^N f_k(z)g_k(z)=\sum_{k\in S}f_k(z)g_k(z)+\sum_{k\notin S}f_k(z)g_k(z)=\sum_{k\in S}f_k(z)g_k(z)=1.$$ This completes the proof.
\end{proof}

\section{Concluding Remarks}The corona problem is an important part of classical function theory. The present paper offers the first positive result with simple proof with out using any of the methods developed to construct solutions of corona problem. It also sheds light on how to extend the result from complex one variable to complex several variables.

\end{document}